\def\be{\begin{equation}}
\def\ee{\end{equation}}
\newtheorem{lemma}{Lemma}[section]
\newtheorem{theorem}[lemma]{Theorem}
\def\Expect{\mathbb{E}~}
\def\Prob{\mathbb{P}}
\newenvironment{dedication}
        {\vspace{6ex}\begin{quotation}\begin{center}\begin{em}}
        {\par\end{em}\end{center}\end{quotation}}
\title{On Busy Periods of the Critical GI/G/1 Queue and BRAVO}
\title{On Busy Periods of the Critical GI/G/1 Queue and BRAVO}
\author{Yoni Nazarathy}\address{School of Mathematics and Physics, The University of Queensland, Brisbane, QLD 4072, Australia}
\email{y.nazarathy@uq.edu.au}
\author[Z. Palmowski]{Zbigniew Palmowski}
\address{ Faculty of Pure and Applied Mathematics,
Wroc\l aw University of Science and Technology,
Wyb. Wyspia\'nskiego 27, 50-370 Wroc\l aw, Poland}
\email{zbigniew.palmowski@pwr.edu.pl}
\thanks{This work is partially supported by Polish National Science Centre Grant No. 2018/29/B/ST1/00756, 2019-2022}
\date{ \today}
\begin{document}
\maketitle

\begin{dedication}
This paper is dedicated to Masakiyo Miyazawa in appreciation of his fundamental contributions
to applied probability
\end{dedication}
\begin{abstract}
We study critical GI/G/1 queues under finite second moment assumptions. We show that the busy period distribution is regularly varying with index half. We also review previously known M/G/1/ and M/M/1 derivations, yielding exact asymptotics as well as a similar derivation for GI/M/1. The busy period asymptotics determine the growth rate of moments of the renewal process counting busy cycles.  We further use this to demonstrate a BRAVO phenomenon (Balancing Reduces Asymptotic Variance of Outputs) for the work-output process (namely the busy-time).  This yields new insight on the BRAVO effect.

A second contribution of the paper is in settling previous conjectured results about GI/G/1 and GI/G/s BRAVO. Previously, infinite buffer BRAVO was generally only settled under fourth-moment assumptions together with an assumption about the tail of the busy-period. In the current paper we strengthen the previous results by reducing to assumptions to existence of $2+\epsilon$ moments.
\end{abstract}


\section{Introduction}
This paper deals with the well studied GI/G/1 queue. In many ways this stochastic model lives in the centre of queueing theory and applied probability as it marks the border of tractable explicit models (e.g. M/G/1) and models for which asymptotic approximations are needed. Living on the explicitly intractable side of the border, GI/G/1 has motivated much early works in asymptotic approximate queueing theory such as \cite{kingman1962queues}, \cite{borovkov1964some} and \cite{iglehart1970mcq}. Indeed GI/G/1 is an extremely studied model (see \cite[Chapter X]{bookAsmussen2003} for an overview). Nevertheless, in this paper, we add new results to the body of knowledge about GI/G/1.

There are several stochastic processes and random variables associated with GI/G/1. Of key interest to us is the the queue length process (including the customer in service), $\{Q(t),~t\ge0\}$, the departure counting process (indicating the number of service completions during $[0,t]$), $\{D(t),~t \ge0\}$, the busy period random variable, $B$, as well as several other processes which we describe in the sequel. The processes $Q(\cdot)$, $D(\cdot)$ and the random variable $B$ are constructed in a standard way (see e.g. \cite[Chapter X]{bookAsmussen2003}) on a probability space supporting two independent i.i.d. sequences of strictly positive random variables. Namely, $\{U_i\}_{i=1}^\infty$ denote the inter-arrival times and $\{V_i\}_{i=1}^\infty$ denotes the service times. Note that in this paper we assume that an arrival to an empty system occurs at time $0$ yielding service duration $V_1$ and then after time $U_1$ the next arrival occurs with service duration $V_2$ and so forth.

As in many, but not all, of the GI/G/1 studies, our focus is on the very special {\em critical} case, i.e. we assume,
\begin{equation}\label{critical}
\Expect[U_1] = \Expect[V_1]:=\lambda^{-1}.
\end{equation}
We also assume that $\Expect[U_1^2],\, \Expect[V_1^2] < \infty$, that is, we are in the case where the inter-arrival and service sequences obey a Gaussian central limit law.

Critical GI/G/1 has been an exciting topic for research for many reasons. It lies on the border of stability ($\Expect[U_1] > \Expect[V_1]$) and instability ($\Expect[U_1] < \Expect[V_1]$). In the near-critical but stable case, sojourn time random variables are approximately exponentially distributed (under some regularity assumptions). Further the queue length and/or workload  processes converge to reflected Brownian motion when viewed through the lens of diffusion scaling. Such scaling can be done both in the near-critical case or exactly at criticality. This is then a much more tractable object for further analysis, in comparison to the associated random walks that are used in the non-critical case. See for example \cite{bookWhitt2001} for a comprehensive treatment of diffusion scaling limits. A useful survey is also in \cite{Glynn0518}. So in general, analysis in the critical case has attracted much attention.

A further specialty arising in the critical case is the well-known fact that the while the busy-period random variable is finite w.p. $1$, it has infinite expectation. One of the contributions of this paper is that we further establish (under the aforementioned finite-second moment assumptions), that the busy period is a regularly varying random variable with index $1/2$, where the associated slowly varying function is bounded from below.  This property of the critical busy period is well-known and  essentially elementary to show for the M/M/1 case. Further, for the M/G/1 case it was established as a side result in \cite{zwart2001tail} with exact tail asymptotics. One of the main result of the current paper is that we find similar exact asymptotics for the GI/G/1 case.  Tails and related properties of the busy period for GI/G/1 and related models have received much attention during the past 15 years. In addition to \cite{zwart2001tail}, the busy period has been studied under various conditions in \cite{baltrunas2004tail}, \cite{kim2007tail}, \cite{denisov2010global}, \cite{baltrunas2002second}, \cite{liu2014tail}, \cite{palmowski2006exact}, \cite{denisov2013asymptotics} and \cite{foss2004existence}.
Our contribution to the body of knowledge in the critical case adds to this.

One of the important reasons for studying the tail behavior of the critical busy-period is in relation to the BRAVO effect (Balancing Reduces Asymptotic Variance of Outputs). This is a phenomenon occurring in a variety of queueing systems (see for example \cite{nazarathy2011variance}), and specifically for the critical GI/G/1. For systems without buffer limitations such as GI/G/1, it was first presented in \cite{al2011asymptotic}. The (somewhat counterintuitive) BRAVO effect is that the long-term variability of the output counts is less than the average variability of the arrival and service process. This is with respect to the {\em asymptotic variance},
\[
\overline{v} := \lim_{t \to \infty} \frac{var\big(D(t)\big)}{t}
= \lambda (c_a^2+c_s^2)  \big(1- \frac{2}{\pi} \big),
\]
where $c_a^2$ and $c_s^2$ are the squared coefficient of variation of the building block random variables, i.e.,
\[
c_a^2 := \frac{var(U_1)}{\Expect[U_1]^2},
\qquad
c_s^2 := \frac{var(V_1)}{\Expect[V_1]^2}.
\]
It thus follows that the variability function  $\lim_{t\to\infty} var D(t)/\mathbb{E} D(t)$, when considered as a function of the system load, has a singular point at the system load equal $1$, which can be regarded as a manifestation of the BRAVO phenomenon.
More specifically, $\overline{v}$ is essentially determined by either the arrival or the service
process when for the system load is not equal $1$, whereas for the system load equal $1$
it is determined by both the arrival and service processes.

In \cite{al2011asymptotic}, the BRAVO effect for GI/G/1 was established for M/M/1 by first principles and for the GI/G/1 it was derived via a classic diffusion limits for $D(\cdot)$,  \cite{iglehart1970mcq}. Following the development of  \cite{al2011asymptotic}, a key technical component for a complete proof of GI/G/1 BRAVO is uniform integrability (UI) of the sequence,
\[
\Big\{
\frac{Q(t)^2}{t},~t \ge t_0
\Big\},
\]
for some non-negative $t_0$.  This UI property was only established in  \cite{al2011asymptotic} under the simplifying assumption that $\Expect[U_1^4] ,~ \Expect[V_1^4] < \infty$ and further under the assumption that the tail behaviour of the busy-period was regularly varying with index half (as we establish in the current paper).

It was further conjectured in \cite{al2011asymptotic} that GI/G/1 BRAVO persists under the more relaxed assumptions that we consider here.
By using new tail behaviour results for the busy period and
further considering properties of renewal processes that generalise some results in \cite{al2011asymptotic}, we settle this conjecture in the current paper under weaker assumptions of existence of $2+\epsilon$ moments of generic inter-arrival and service time.

The structure of the remainder of the paper is as follows. In Section~\ref{sec:busyPeriod} we study the busy period tail behaviour
and compare it with the M/M/1 and M/G/1 cases.
In Section~\ref{sec:bravoAgain} we establish GI/G/1 BRAVO under the relaxed assumptions.

\section{Busy Period}
\label{sec:busyPeriod}
For analysis of the busy period, it is useful to denote $\xi_i := V_i-U_{i}$ for $i=1,2,\ldots$ and
\begin{equation}\label{rw}
S_n := \sum_{i=1}^n \xi_i\quad \text{with $S_0=0$.}
\end{equation}
 The random walk $S_n$ is embedded within the well known workload process. Within the first busy period, $S_n$ denotes the workload of the system immediately after the arrival of customer $n$. Then the number of customers served during this busy period is $N := \inf \{ n \ge 1 : S_n \le 0\}$.  With $N$ at hand we can then define the busy period (duration) random variable as,
\begin{equation}\label{busyrepr}
B := \sum_{i=1}^N V_i.
\end{equation}
Note of course that $N$ and the sequence $\{V_i\}$ are generally dependent and further note that $N$ is a stopping time with infinite expectation.
We will also need the generic idle period that follows the busy period which equals
\[
I := -S_N.
\]

We recall that in this paper we assume the critical case when \eqref{critical} holds.
we will write $f(x)\sim g(x)$ when $\lim_{x\to\infty} f(x)/g(x)=1$.
The main result of this section is given by the following  theorem
\begin{theorem}
\label{thm:bpMain}
Consider the case where $\Expect[U_1] = \Expect[V_1]$ and $\Expect[U_1^2],~ \Expect[V_1^2]< \infty$. Then,
\[
\Prob \big( B > x \big) \sim \mathbb{E}I \sqrt{\frac{2\lambda}{\pi(c_a^2+c_s^2)}} x^{-1/2}.
\]
\end{theorem}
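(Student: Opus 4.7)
The argument has two main steps: first analyse the tail of the random-walk first-passage time $N=\inf\{n\ge 1:S_n\le 0\}$, and then transfer the asymptotic to $B=\sum_{i=1}^N V_i$. The main technical difficulty is locating the exact prefactor in the first step under only a second-moment hypothesis; the decay rate $n^{-1/2}$ drops out of classical identities fairly cheaply, but pinning the prefactor to $\mathbb{E}[I]$ requires the Spitzer--Baxter series, whose convergence sits precisely at the $L^2$ threshold.

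In the critical case $\mathbb{E}[\xi_1]=0$ and $\sigma^2:=\mathrm{var}(\xi_1)=\mathrm{var}(U_1)+\mathrm{var}(V_1)=(c_a^2+c_s^2)/\lambda^2$, so $N$ is the first weak descending ladder epoch of a centred random walk with finite variance. The Sparre Andersen identity gives
\[
\sum_{n=0}^\infty z^n\,\Prob(N>n)=\frac{1}{1-z}\exp\!\Big(-\!\sum_{n=1}^\infty \frac{z^n}{n}\Prob(S_n\le 0)\Big),\qquad 0<z<1.
\]
The CLT yields $\Prob(S_n\le 0)\to 1/2$ and the Spitzer--Baxter series $\sum_n (1/n)(\Prob(S_n\le 0)-1/2)$ converges under $\mathbb{E}[\xi_1^2]<\infty$, so the right-hand side behaves like $A(1-z)^{-1/2}$ as $z\uparrow 1$, where $A:=\exp\bigl(-\sum_{n\ge 1}(1/n)(\Prob(S_n\le 0)-1/2)\bigr)$. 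Karamata's Tauberian theorem then gives $\Prob(N>n)\sim A/\sqrt{\pi n}$. The classical Spitzer--Baxter formula for the mean weak descending ladder height, $\mathbb{E}[I]=\sigma A/\sqrt{2}$, identifies the constant and yields
\[
\Prob(N>n)\sim \frac{\mathbb{E}[I]}{\sigma}\sqrt{\frac{2}{\pi n}}.
\]

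To transfer this to $B$, set $T(x):=\inf\{n\ge 1:\sum_{i=1}^n V_i>x\}$; then $\{B>x\}=\{N\ge T(x)\}$. The renewal CLT shows that $(T(x)-\lambda x)/\sqrt{x}$ is tight, so given $\varepsilon>0$ one can choose $K$ with $\Prob(|T(x)-\lambda x|>K\sqrt{x})<\varepsilon$ for all large $x$. Splitting on this event gives the sandwich
\[
\Prob\bigl(N\ge \lambda x+K\sqrt{x}\bigr)-\varepsilon\le \Prob(B>x)\le \Prob\bigl(N\ge \lambda x-K\sqrt{x}\bigr)+\varepsilon,
\]
and since $K\sqrt{x}=o(\lambda x)$, both outer probabilities are asymptotically equal to $(\mathbb{E}[I]/\sigma)\sqrt{2/(\pi\lambda x)}$ by the first step. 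Letting $\varepsilon\downarrow 0$ and substituting $\sigma=\sqrt{c_a^2+c_s^2}/\lambda$ produces the stated constant.
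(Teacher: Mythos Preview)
Your Step~1 is essentially the paper's argument expanded: the paper cites Feller for both the convergence of the Spitzer series and the tail asymptotic $\Prob(N>n)\sim\pi^{-1/2}e^{-b}n^{-1/2}$, then identifies $e^{-b}=\sqrt{2}\,\mathbb{E}I/\sigma$. Your Sparre Andersen/Tauberian derivation is a legitimate route to the same conclusion.

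Step~2, however, has a gap. Your sandwich
\[
\Prob\bigl(N\ge \lambda x+K\sqrt{x}\bigr)-\varepsilon\le \Prob(B>x)\le \Prob\bigl(N\ge \lambda x-K\sqrt{x}\bigr)+\varepsilon
\]
carries an \emph{additive} error $\varepsilon$, whereas the target $\Prob(N>\lambda x)$ is of order $x^{-1/2}\to 0$. Dividing through by $x^{-1/2}$ and sending $x\to\infty$ blows up the $\varepsilon$ terms, so ``letting $\varepsilon\downarrow 0$'' afterward recovers nothing. Tightness of $(T(x)-\lambda x)/\sqrt{x}$ is simply too weak here: you need the exceptional event to have probability $o(x^{-1/2})$, not merely probability below some fixed $\varepsilon$.

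The repair is cheap. Under $\mathbb{E}V_1^2<\infty$ one has $\mathrm{var}\,T(x)=O(x)$ (classical renewal theory), so Chebyshev gives $\Prob(|T(x)-\lambda x|>K\sqrt{x})\le C/K^2$. Now let $K=K(x)\to\infty$, say $K(x)=x^{1/3}$: then $K(x)\sqrt{x}=x^{5/6}=o(x)$, so regular variation of $\Prob(N>\cdot)$ still handles the outer terms, while the error $C/K(x)^2=Cx^{-2/3}=o(x^{-1/2})$ is now negligible and the sandwich closes correctly. The paper sidesteps the whole issue by invoking a random-sum transfer theorem of Robert and Segers, which yields $\Prob(B>x)\sim\Prob(N>\lambda x)$ directly from regular variation of $N$ together with $x\,\Prob(V_1>x)=o(\Prob(N>x))$.
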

\begin{proof}
From \cite[Thm. XVIII.5.1, p. 612]{feller1966ipt} we know that the series
\[\sum_{n=1}^\infty\left[\mathbb{P}(S_n<0)-\frac{1}{2}
\right]\]
converges to some $b>0$ and
\begin{equation}\label{meanidle}
\mathbb{E}I=-\mathbb{E}S_N=\frac{\sigma}{\sqrt{2}}e^{-b}
\end{equation}
where
\[\sigma^2=var(V_1-U_1)=(c_a^2+c_s^2)\lambda^{-2}.\]
Moreover, from \cite[Thm. XII.7.1a, p. 415]{feller1966ipt}, we have
\begin{equation}\label{asympN}
\mathbb{P}(N>n)\sim \frac{1}{\sqrt{\pi}}e^{-b}n^{-1/2}.
\end{equation}
Combining above facts gives
\begin{equation}\label{asympN}
\mathbb{P}(N>n)\sim -\mathbb{E}S_N\sqrt{\frac{2}{\pi\sigma ^2}}n^{-1/2}=
\mathbb{E}I \sqrt{\frac{2\lambda^2}{\pi (c_a^2+c_s^2)}}n^{-1/2}.
\end{equation}
In the last step we apply \cite[Thm. 3.1]{robert2008tails} together with the representation \eqref{busyrepr}.
Namely, observe that by \eqref{asympN} is regularly varying and hence of consistent variation.
Moreover, $\mathbb{E}V_1^2<\infty$ and $x\mathbb{P}(V_1>x)=o(\mathbb{P}(N>x))$.
Thus
\[\mathbb{P}(B>x)\sim \mathbb{P}(N>\lambda x)\]
which completes the proof in view of \eqref{busyrepr}.
\end{proof}

From Theorem \ref{thm:bpMain} one can recover known
result for the M/G/1 queue.  Indeed, in this case the idle period has an exponential distribution with the parameter $\lambda>0$ and therefore
$\mathbb{E}I=\frac{1}{\lambda}$. Furthermore, $c_a=1$ and hence
\[\Prob(B > x ) \sim \lambda ^{-1/2} \sqrt{\frac{2}{( 1 + c_s^2) \pi }}{x}^{-1/2}
\]
The M/G/1 result appeared in \cite{zwart2001tail}, but with a small typo for the constant in front of $x^{-1/2}$.

In the case of G/M/1 queue, we have $c_s=1$ and the first increasing ladder height $H_1$ of the random walk \eqref{rw}
has exponential distribution with the parameter $\lambda>0$ and therefore $\mathbb{E}H_1=\frac{1}{\lambda}$.
Moreover, by \cite[eq. (4c)]{Doney1980} we have
\[(c_a^2+c_s^2)\lambda^{-2}=2\mathbb{E}H_1\mathbb{E}I=\frac{2}{\lambda}\mathbb{E}I\]
and therefore
\[\mathbb{E}I=(c_a^2+c_s^2)\frac{1}{2\lambda}.\]
Theorem \ref{thm:bpMain} gives then
\[
\Prob(B > x ) \sim
\lambda ^{-1/2} \sqrt{\frac{( c_a^2+1)}{2 \pi}}{x}^{-1/2}.
\]
Finally we mention that the above agree with the M/M/1 case with
$\Prob(B > x ) \sim
\lambda ^{-1/2} {(\pi x)}^{-1/2}$ which may be also obtained via asymptotics of Bessel functions as the distribution of $B$ is exactly know.

\section{BRAVO}
\label{sec:bravoAgain}

We now establish BRAVO for the GI/G/1 with what we believe to be the minimal possible set of assumptions. The result below unifies Theorem~2.1 and Theorem~2.2 of  \cite{al2011asymptotic} and also settles Conjecture 2.2 of that paper for the single-server case.
\begin{theorem}
\label{thm:mainBRAVO}
Consider the GI/G/1 queue with $\Expect[U_1] = \Expect[V_1]$ and $\Expect[U_1^{2+\epsilon}],~ \Expect[V_1^{2+\epsilon}]< \infty$ for any $\epsilon>0$. Then,
\[
\lim_{t \to \infty} \frac{Var\big(D(t)\big)}{\Expect[D(t)]} =  (c_a^2+c_s^2)  \big(1- \frac{2}{\pi} \big).
\]
\end{theorem}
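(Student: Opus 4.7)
My plan is to follow the overall structure of \cite{al2011asymptotic} and derive BRAVO from the joint diffusion limit of $(A(\cdot),Q(\cdot))$, but to replace the fourth-moment hypothesis used there by an argument relying on the busy-period tail asymptotics of Theorem~\ref{thm:bpMain}. The starting identity is sample-path flow conservation,
\[
D(t) \;=\; A(t) - Q(t) + Q(0),
\]
which gives
\[
\mathrm{Var}\bigl(D(t)\bigr) \;=\; \mathrm{Var}\bigl(A(t)\bigr) + \mathrm{Var}\bigl(Q(t)\bigr) - 2\,\mathrm{Cov}\bigl(A(t),Q(t)\bigr) + o(t),
\]
so the task reduces to identifying the three leading-order terms on the right and combining them with $\mathbb{E}[D(t)]\sim \lambda t$.

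The first step is the joint functional CLT under diffusion scaling. Under the $2+\epsilon$-moment hypothesis the renewal FCLT gives $t^{-1/2}(A(\lfloor t\cdot\rfloor)-\lambda t\cdot)\Rightarrow \sqrt{\lambda c_a^2}\,W$ for a standard Brownian motion $W$, and jointly with this, Iglehart--Whitt yields convergence of $t^{-1/2}Q(t\cdot)$ to a reflected Brownian motion with variance parameter $\sigma_R^2=\lambda(c_a^2+c_s^2)$. The marginal at time $1$ produces $\mathbb{E}[Q(t)^2]/t\to \sigma_R^2$ and $\mathbb{E}[Q(t)]^2/t\to (2/\pi)\sigma_R^2$, hence $\mathrm{Var}(Q(t))/t\to \sigma_R^2(1-2/\pi)$. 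Combined with $\mathrm{Var}(A(t))/t\to \lambda c_a^2$ and the covariance limit extracted from the joint CLT, the BRAVO constant $(c_a^2+c_s^2)(1-2/\pi)$ emerges after simple algebra.

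The catch is that distributional convergence alone does not yield convergence of second moments; one needs uniform integrability of $\{A(t)^2/t\}$, $\{Q(t)^2/t\}$, and of the cross term, for $t$ large. UI of $A(t)^2/t$ is routine once $\mathbb{E}[U_1^{2+\epsilon}]<\infty$, and the cross term is controlled by Cauchy--Schwarz once the two marginal UIs are in hand. The core of the proof is therefore UI of $\{Q(t)^2/t\}$ under the weakened moment hypothesis. My plan is to exploit the regenerative structure of $Q(\cdot)$: conditioning on the renewal process of busy cycles, $Q(t)$ is dominated by the maximum queue attained during the cycle straddling $t$. Using Theorem~\ref{thm:bpMain} one controls both the size-biasing of this straddling cycle and its forward recurrence time, and combining with standard moment bounds on within-cycle random-walk maxima (which require $\mathbb{E}[U_1^{2+\epsilon}],\mathbb{E}[V_1^{2+\epsilon}]<\infty$), one arrives at $\mathbb{E}[Q(t)^{2+\delta}]=O(t^{1+\delta/2})$ for some $\delta\in(0,\epsilon)$, which delivers the required UI.

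The principal obstacle is this last moment bound. Because $B$ has infinite mean, the elementary renewal moment inequalities are unavailable and one must operate in the regime of regularly varying inter-renewal distributions. I expect the argument to proceed via truncation, separating busy cycles of ``typical'' length from anomalously long ones, using the $2+\epsilon$ moments of $U_1$ and $V_1$ to control within-cycle behaviour and the Karamata-type tail asymptotics from Theorem~\ref{thm:bpMain} to handle the tail contribution. This is precisely where the present paper improves on \cite{al2011asymptotic}: the proof of Theorem~\ref{thm:bpMain} removes the need to assume the busy-period regular variation, and a careful renewal-counting argument replaces the brute-force use of fourth moments. Once the UI is in place, Theorem~\ref{thm:mainBRAVO} follows by combining the joint FCLT with uniform integrability and dividing by $\mathbb{E}[D(t)]\sim \lambda t$.
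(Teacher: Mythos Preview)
Your overall strategy---reducing BRAVO to uniform integrability of $\{Q(t)^2/t\}$ via the framework of \cite{al2011asymptotic}, and invoking the busy-period tail asymptotics of Theorem~\ref{thm:bpMain} to discharge the corresponding hypothesis in \cite{al2011asymptotic}---matches the paper. Where you diverge is in the mechanism for weakening the fourth-moment assumption to $2+\epsilon$ moments. You sketch a regenerative argument (straddling busy cycle, truncation, Karamata control of the tail) aimed at $\mathbb{E}[Q(t)^{2+\delta}]=O(t^{1+\delta/2})$, but leave the crucial within-cycle moment bound as an expectation rather than a proof.

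The paper takes a shorter and more direct route at this point: it simply reruns the chain \cite[Lem.~2.1, Prop.~4.1, Thm.~4.1, Thm.~4.2(ii), Thm.~4.3]{al2011asymptotic} with the exponent $4$ replaced by $2+\epsilon$ throughout, and identifies the single place where this substitution does not go through verbatim---namely the Wald-type step in \cite[Thm.~4.1]{al2011asymptotic}. There the required bound is $\mathbb{E}|M_{V(t)}|^{2+\epsilon}=O(t^{1+\epsilon/2})$ for a centred random walk $M_n$ stopped at a first-passage time $V(t)$, and the paper obtains it from the Marcinkiewicz--Zygmund inequality for stopped random walks \cite[Thm.~I.5.1(iii)]{Gut}, giving $\mathbb{E}|M_{V(t)}|^{2+\epsilon}\le K\,\mathbb{E}[V(t)^{1+\epsilon/2}]$, combined with $\mathbb{E}[V(t)^{1+\epsilon/2}]=O(t^{1+\epsilon/2})$ from \cite[Thm.~III.8.1]{Gut}. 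No cycle-truncation or Karamata analysis is needed for this step. Your regenerative plan may well be made to work, but you should be aware that the paper isolates a single off-the-shelf inequality as the new ingredient replacing fourth moments, rather than the ``careful renewal-counting argument'' you anticipate.
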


\begin{proof}
Following Theorem~2.1 of \cite{al2011asymptotic} we need to show uniformly integrability (UI) of
\[
\mathcal{Q} := \Bigg\{  \frac{Q(t)^2}{t},~t \ge t_0 \Bigg\}
\]
for some $t_0>0$.
Once we establish this UI, the result follows.

We can follow the same idea like one can find
in the proof of \cite[Thm. 2.2]{al2011asymptotic}. Namely, we
can apply \cite[Lem. 2.1]{al2011asymptotic} with $r=2+\epsilon$ for $\epsilon>0$
and \cite[Prop. 4.1, Thm. 4.1, Thm. 4.2(ii) and Thm. 4.3]{al2011asymptotic} exchanging 4th moments by $2+\epsilon$ ones
and $8=2^3$ should be replaced by $2^{1+\epsilon}$.
There is one crucial difference though in the proof of \cite[Thm. 4.1]{al2011asymptotic}.
Namely, without using the Wald idenity used in \cite{al2011asymptotic},
we have to show there that, $\Expect[M_{V(t)}^{2+\epsilon}] = O(t^{1+\epsilon/2})$, where
$M_n = \sum_{i=1}^n \frac{1}{\Expect[\zeta_i]} \zeta_i - n$ is mean zero random walk with
$\zeta_i$ being i.i.d. random variables with $2+\epsilon$ finite moments
and $V(t) = \inf \{n : \sum_{i=1}^n \zeta_i \ge t \}$ is the first passage time.

From the Marcinkiewicz-Zygmund inequality for a stopped random walk
given in \cite[Thm. I.5.1(iii), p. 22]{Gut} with $r=2+\epsilon$ we can conclude that
\[\mathbb{E} |M_{V(t)}|^{2+\epsilon}\leq K\; \mathbb{E}\left|\zeta_1-\frac{1}{\Expect[\zeta_i]}\right|^{2+\epsilon}
\mathbb{E}V(t)^{(2+\epsilon)/2}=
{\rm O}\left(\mathbb{E}V(t)^{1+\epsilon/2}\right)\]
for some constant $K$.
%
Moreover, from \cite[Thm. III.8.1, p. 98]{Gut} we know that
$\lim_{t\to\infty} \mathbb{E}V(t)^{1+\epsilon/2}/t^{1+\epsilon/2} <\infty$. Hence
\[\mathbb{E} |M_{V(t)}|^{2+\epsilon}={\rm O}(t^{1+\epsilon/2})\]
which completes the proof.
\end{proof}



\section*{Acknowledgements}
The authors are very thankful to Peter Taylor that was involved in early discussions on this paper.

\end{document}